
\documentclass[12pt,a4paper,a4wide]{amsart}
\usepackage{amssymb,amsmath,amsfonts,amscd,epsfig}

\usepackage[centering,paperwidth=21.40cm,paperheight=29.7cm,body={15.06true cm,24.7true cm}]{geometry}


\newtheorem{theorem}{Theorem}[section]

\newtheorem{lemma}[theorem]{Lemma}

\theoremstyle{definition}
\newtheorem{definition}[theorem]{Definition}
\theoremstyle{remark}
\newtheorem{rem}[theorem]{Remark}
\numberwithin{equation}{section}

\numberwithin{equation}{section}
\allowdisplaybreaks

\begin{document}

\title[compactness]
 {The compactness of  commutators of Calder\'{o}n-Zgymund operators with Dini condition}

\author{Meng Qu and Ying Li}
\address
        {Meng Qu and Ying Li\\
                School of Mathematical and Computer Sciences \\
                Anhui Normal University \\
                Wuhu 241003 \\
                China}
        \email{  qumeng@mail.ahnu.edu.cn; ll@mail.ahnu.edu.cn}

 \thanks{Supported by NNSF of China (No.\,11471033),
                NSF of Anhui Provincial (No.\,1408085MA01),
                 and University NSR Project of Anhui Province (No.\,KJ2014A087).}
\subjclass{Primary 42B20; Secondary 46A32, 47D20}

\keywords{commutators; Calder\'{o}n-Zgymund operators; compactness.}

\date{Oct 25, 2016 
}


\begin{abstract}
        Let $T$ be the  $\theta$-type Calder\'{o}n-Zgymund operator with Dini condition.
        In this paper, we prove that  for $b\in {\rm  CMO}(\mathbb R^n)$,  the commutator generated
         by $T$ with $b$ and the corresponding maximal commutator, are both  compact  operators on  $L^{p}(\omega)$ spaces,
         where $\omega$ be the Muchenhoupt $A_p$ weight function and $1<p<\infty$.
\end{abstract}

\maketitle

\section{Introduction and main results}

Let $\theta:[0,1]\rightarrow[0,\infty)$ be a continuous, increasing, subadditive function with $\theta(0)=0$, we say that $\theta$ satisfies the $Dini$ condition if $\int_{0}^{1}\theta(t)\frac{dt}{t}<\infty.$
A measurable function $K(\cdot,\cdot)$ on $\mathbb R^n\times\mathbb R^n\backslash\{(x,x):x\in\mathbb R^n\}$ is said to be a $\theta$-type kernel if it satisfies
\begin{enumerate}
\item $|K(x,y)|\leq \frac{C}{|x-y|^n}\quad\text{whenever }x\neq y ,$
\item $|K(x,y)-K(x',y)|+|K(y,x)-K(y,x')|\leq \theta(\frac{|x-x'|}{|x-y|})\frac{1}{|x-y|^{n}}\quad\text{whenever }|x-y|\geq2|x-x'|.$
\end{enumerate}
We say that $T$ is a $\theta$-type Calder\'on--Zygmund operator if
\begin{enumerate}

\item  $T$ can be extended to be a bounded linear operator on $L^2(\mathbb R^n);$

\item  There is a $\theta$-type kernel $K(x,y)$ such that
\begin{equation}
Tf(x):=\int_{\mathbb R^n}K(x,y)f(y)\,dy
\end{equation}
\end{enumerate}
for all $f\in C^\infty_0(\mathbb R^n)$ and for all $x\notin {\rm supp}\,f$, where $C^\infty_0(\mathbb R^n)$ is the space consisting of all infinitely differentiable functions on $\mathbb R^n$ with compact supports. Historically, The $\theta$-type Calder\'on--Zygmund operator was introduced by Yabuta in  \cite{yabuta} as a nature generalization of the classical  Calder\'on-Zygmund operator. We note that  when $\theta(t)=t^{\delta}$ with $0<\delta\leq1$, the $\theta$-type  operator is just the classical Calder\'on-Zygmund operator with standard kernel (see \cite{duoand,garcia}).
Given a locally integrable function $b$ defined on $\mathbb R^n$, and given a $\theta$-type Calder\'on--Zygmund operator $T $, the linear commutator $[b,T]$ generated by $b$ and $T $ is defined for smooth, compactly supported function $f$ as
\begin{equation}\label{eq:bT}
\begin{split}
[b,T ]f(x)&:=b(x)Tf(x)-T(bf)(x)\\
&=\int_{\mathbb R^n}\big[b(x)-b(y)\big]K(x,y)f(y)\,dy.
\end{split}
\end{equation}
Also, we can define the maximal commutator of $\theta$-type Calder\'on--Zygmund operator as
$$
[b,T^{\ast}] f(x)=\sup_{\epsilon>0}\left|T_{\epsilon}f(x)\right|,
$$
where $
T_{\epsilon}f(x)=\int_{|x-y|\geq\epsilon}\big[b(x)-b(y)\big]K(x,y)f(y){d}y
$ be the truncated part of \eqref{eq:bT}. Historically, commutators related to singular integral  gave a new characterization of $\rm BMO$ function space, see Coifman, Rochberg and Weiss \cite{coifman-rochberg-weiss-1976} and Janson\cite{Janson-1978}. Recently,  Lerner \cite{Lerner} considered the weighted $L^p(\omega)$ estimates for $T$ with the sharp norm constant with respect to weight function, where $\omega$ be Muchenhoupt weight function and $1<p<\infty$(see also Quak-Yang \cite{quek} for boundedness  without the sharp constant case). As the consequence of \cite{Alvarez}, $[b,T]$ is bounded also on  $L^p(\omega)$.

On the other hand, many researcher were interested  in discussing the compact of commutators.
 Uchiyama \cite{uchiyama-1978} proved
that  the commutator generated by a locally integral function $b$ with the homogenous singular integral(Lipschitz kernel) is compact on $L^p$
 if and only if $b\in VMO(\mathbb R^n)$.
  Recently, Chen, Ding, Hu etc. consider the compactness of commutators generated by singular integrals with  rougher kernel, see\cite{chen-2015}.
 Torres etc discuss the multilinear case for compactness.
Krantz and Li\cite{KraLi2} disscuss the compact on $L^p(X)$, where $X$ be he space of homogenous type. applicaiotn.
Recently to study the regularity of solutions to the Beltrami equation, Clop and Cruz\cite{Clop-Cruz-2013-Ann.Fenn.Math} proved that when $b$ is a VMO function, the commutator for stand Calderon-Zygmund operator is compact on $L^p(\omega)$. It is nature to ask whether $b$ belongs to  VMO is also the sufficient condition for the compactness for  the commutator generated by the $\theta$-Calderon-Zygmund operator on  $L^p(\omega)$. This note will give a formative answer,
moreover, we prove that maximal commutator $[b,T^{\ast}] $ share the same result. More precisely, we give here the main result as the following theorem.
\begin{theorem}\label{thm:compact}
Let $T$ be a $\theta$-Calder\'on-Zygmund operator with $\theta$ satisfying the $Dini$ condition and $\omega\in A_{p}$ with $1<p<\infty$. If  $b\in VMO(\mathbb{R}^{n})$, then
 \begin{enumerate}
   \item $[b,T]\colon L^{p}(\omega)\to L^{p}(\omega)$ is compact.
   \item $[b,T^*]\colon L^{p}(\omega)\to L^{p}(\omega)$ is compact.
 \end{enumerate}
 \end{theorem}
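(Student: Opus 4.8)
The plan is to follow the now-standard Fréchet–Kolmogorov route for compactness in $L^p(\omega)$: recall that a bounded subset $\mathcal{F}\subset L^p(\omega)$ is precompact if and only if (i) it is uniformly bounded, (ii) it is uniformly "small at infinity", i.e. $\lim_{A\to\infty}\sup_{f\in\mathcal{F}}\norm{f\chi_{\{|x|>A\}}}_{L^p(\omega)}=0$, and (iii) it is uniformly equicontinuous under translations, i.e. $\lim_{|h|\to0}\sup_{f\in\mathcal{F}}\norm{f(\cdot+h)-f(\cdot)}_{L^p(\omega)}=0$. We apply this with $\mathcal{F}=\{[b,T]f:\norm{f}_{L^p(\omega)}\le1\}$ and likewise for $[b,T^*]$. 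The first reduction is to use the density of $C^\infty_c(\mathbb{R}^n)$ in $VMO(\mathbb{R}^n)$: since $b\mapsto[b,T]$ and $b\mapsto[b,T^*]$ are (sub)linear and bounded $\mathrm{BMO}\to\mathcal{B}(L^p(\omega))$ by the results cited after \eqref{eq:bT}, and since compact operators form a closed subspace, it suffices to prove compactness when $b\in C^\infty_c(\mathbb{R}^n)$. Fix such a $b$, supported in a ball $B(0,R)$, with $\norm{\nabla b}_\infty<\infty$.

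With $b$ smooth and compactly supported, conditions (i) and (ii) are the easy ones. Boundedness (i) is immediate from the $L^p(\omega)$ boundedness of the commutator. For (ii), one splits $[b,T]f(x)=\int(b(x)-b(y))K(x,y)f(y)\,dy$: when $|x|>2R$ the term $b(x)$ vanishes, so $[b,T]f(x)=-T(bf)(x)$, and $bf$ is supported in $B(0,R)$; using the size estimate $|K(x,y)|\le C|x-y|^{-n}\lesssim|x|^{-n}$ for $|x|>2R$, $|y|<R$, together with Hölder's inequality and the $A_p$ property of $\omega$ to control $\int_{B(0,R)}|f|$, one gets a bound that decays in $|x|$ fast enough to be $L^p(\omega)$-integrable over $\{|x|>A\}$ with the integral tending to $0$ as $A\to\infty$, uniformly in $\norm{f}_{L^p(\omega)}\le1$. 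The maximal commutator $[b,T^*]$ is handled the same way, since for $|x|>2R$ every truncation $T_\epsilon f(x)$ is again an integral of $-b(y)K(x,y)f(y)$ over a subset of $B(0,R)$, so the pointwise supremum obeys the same tail bound.

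The heart of the matter is the equicontinuity condition (iii), and this is where the $Dini$ condition on $\theta$ enters decisively. One writes, for small $h$,
\[
[b,T]f(x+h)-[b,T]f(x)=\int_{\mathbb{R}^n}\Big[(b(x+h)-b(y))K(x+h,y)-(b(x)-b(y))K(x,y)\Big]f(y)\,dy,
\]
and splits the difference of the two integrands into a "smooth part" coming from $b(x+h)-b(x)$ (controlled by $|h|\,\norm{\nabla b}_\infty$ times $\int|K(x,y)||f(y)|\,dy$, which feeds into the maximal singular integral and is thus $L^p(\omega)$-bounded with a factor $|h|$) and a "kernel part" $(b(x+h)-b(y))\big(K(x+h,y)-K(x,y)\big)$. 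For the kernel part one decomposes the $y$-integral into the near region $|x-y|<2|h|$ and the far region $|x-y|\ge2|h|$. On the near region one uses only the size estimates on $K$ (and the Lipschitz bound $|b(x+h)-b(y)|\lesssim|x-y|+|h|\lesssim|h|$ there) to get a contribution bounded by $|h|$ times a local maximal function of $f$. On the far region the smoothness estimate (2) applies: $|K(x+h,y)-K(x,y)|\le\theta\!\left(\frac{|h|}{|x-y|}\right)\frac{1}{|x-y|^n}$, and summing dyadically over $2^k|h|\le|x-y|<2^{k+1}|h|$ produces the tail sum $\sum_{k\ge1}\theta(2^{-k})$, which is finite and small precisely because $\theta$ satisfies the $Dini$ condition (indeed $\sum_k\theta(2^{-k})\lesssim\int_0^1\theta(t)\frac{dt}{t}<\infty$). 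Packaging these estimates through a Cotlar-type / good-$\lambda$ comparison with the Hardy–Littlewood maximal operator and the (weighted $L^p$-bounded) maximal $\theta$-Calderón–Zygmund operator, and invoking Muckenhoupt's theorem for $\omega\in A_p$, yields $\sup_{\norm{f}_{L^p(\omega)}\le1}\norm{[b,T]f(\cdot+h)-[b,T]f}_{L^p(\omega)}\le C\big(|h|+\sum_{2^{-k}\le c|h|}\theta(2^{-k}) + \omega\text{-small terms}\big)\to0$ as $|h|\to0$. For $[b,T^*]$ one first uses the pointwise inequality $\big|[b,T^*]f(x+h)-[b,T^*]f(x)\big|\le\sup_\epsilon|T_\epsilon f(x+h)-T_\epsilon f(x)|$ and then runs the identical near/far decomposition inside the supremum, the $Dini$ sum again absorbing the far part uniformly in $\epsilon$.

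The main obstacle I anticipate is the kernel/translation estimate in region $|x-y|\ge 2|h|$ done \emph{uniformly over the truncation parameter $\epsilon$} for the maximal commutator: one must check that the dyadic $Dini$ summation is not disrupted by the position of $\epsilon$ relative to $|h|$, handling carefully the single dyadic annulus (if any) where the truncation cuts through, for which the crude size bound on $K$ suffices. A secondary technical point is keeping all estimates weighted: every time the Hardy–Littlewood maximal function or the maximal singular integral appears it must be invoked with $\omega\in A_p$, and the "smallness at infinity" step requires the reverse-Hölder / doubling properties of $A_p$ weights to convert $\int_{B(0,R)}|f|\,dy$ into $\norm{f}_{L^p(\omega)}$ with a finite constant; none of these is deep, but they must be tracked consistently.
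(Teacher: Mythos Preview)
Your overall strategy matches the paper's, but the equicontinuity step contains a genuine gap. You split the kernel part at $|x-y|=2|h|$ and then claim the far region contributes a tail sum $\sum_{2^{-k}\le c|h|}\theta(2^{-k})\to0$. But the dyadic decomposition you actually wrote, over shells $2^{k}|h|\le|x-y|<2^{k+1}|h|$ with $k\ge1$, gives $\theta(|h|/|x-y|)\le\theta(2^{-k})$ and hence the \emph{full} series $\sum_{k\ge1}\theta(2^{-k})$, which is finite by the Dini condition but is a fixed constant independent of $h$; it does not tend to $0$. The remedy is to split at an intermediate scale $\eta$ with $|h|\ll\eta\ll1$: the near piece $|x-y|<\eta$ is then controlled by $C\eta\,(Mf(x)+Mf(x+h))$ using only $|b(\cdot)-b(y)|\lesssim|\cdot-y|$ and the size bound on $K$, while the far piece yields $\int_{0}^{C|h|/\eta}\theta(t)\,\frac{dt}{t}\to0$. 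The paper implements precisely this idea for part (1) by first passing to a smoothly truncated operator $T^{\eta}$ with \emph{fixed} $\eta$, proving equicontinuity for $[b,T^{\eta}]$ (so the far region starts at $\eta/2$ and one genuinely obtains $\int_{0}^{4|h|/\eta}\theta(t)\,\frac{dt}{t}$), and then letting $[b,T^{\eta}]\to[b,T]$ in operator norm.

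For $[b,T^{*}]$ there is a second, separate gap. You correctly isolate the annulus where the truncation level $\epsilon$ cuts through, but assert that ``the crude size bound on $K$ suffices.'' It does not: on $\{||x-y|-\epsilon|\lesssim|h|\}$ the size bound together with $Mf$ gives at best a constant, uniformly in $\epsilon$. The paper (its terms $E_{221}$, $E_{222}$) introduces an auxiliary parameter $\varepsilon\in(0,\tfrac14)$, splits at $|x-y|=\varepsilon^{-1}|h|$, and observes that on the far side the truncation annulus has relative width $\lesssim\varepsilon$; then a H\"older step with exponent $r\in(1,p)$ replaces $Mf$ by $M(|f|^{r})^{1/r}$ (still bounded on $L^{p}(\omega)$ since $\omega\in A_{p}\subset A_{p/r}$) and produces a factor $\bigl(\log\tfrac{1}{1-\varepsilon}\bigr)^{1/r'}\sim\varepsilon^{1/r'}$. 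Finally $|h|$ and $\varepsilon$ are linked (e.g.\ $|h|\sim\varepsilon^{3}$) so that every piece vanishes. One more point: $[b,T^{*}]$ is sublinear, not linear, so ``compact operators form a closed subspace'' is not the right justification for the reduction to $b\in C^{\infty}_{c}$; you need the pointwise inequality $\bigl|[b,T^{*}]f-[b_{j},T^{*}]f\bigr|\le[b-b_{j},T^{*}]f$ together with a separate proof that $[\,\cdot\,,T^{*}]$ is bounded on $L^{p}(\omega)$ with norm controlled by $\|\cdot\|_{\mathrm{BMO}}$, which the paper supplies as an independent lemma.
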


\begin{rem}
  For the compact of  $[b,T]$, our proof is quiet similar as in \cite{Clop-Cruz-2013-Ann.Fenn.Math}, based on the argument
  in\cite{KraLi2}. However we do some modification by choosing a suitable smooth truncation
  technique, the idea is coming from B\'enyi-Dami\;anMoen-Torres \cite{Benyi-Damian-Moen-Torres-2015} dealing sharp constant for $A_p$ estimate.
  To some extend, our proof here simplify the one in  \cite{Clop-Cruz-2013-Ann.Fenn.Math}. On the other hand, to obtain
  the compact of  $[b,T^*]$, we need the boundedness of  $[b,T^*]$ on $L^p(\omega)$. We mention that
  since   $[b,T^*]$ is non-linear, it is not the direct consequences of \cite{Alvarez}. We prove it in a rather
  simple way based on some results in \cite{huguoen-lin-yang-2008-aaa}.
\end{rem}
This note is organized as following way. In Section 2, we give some definitions and some lemmas.
We deal with  $[b,T]$ in Section 3,  while $[b,T^*]$ is in  Section 4. last but not least,
we denote $s^{\prime}=\frac{s}{s-1}$ and
$C$ be a positive constant whose value may change at each
appearance.

\section{Some definitions and  technical lemmas}

%

As usually, we denote $\langle f\rangle_{E}=\frac{1}{|E|}\int_{E} f(x){d}x$. We say $\omega$ is a  weight  function if $\omega\in L_{loc}^{1}(\mathbb{R}^{n})$  such that $\omega(x)>0$ almost everywhere. A weight function $\omega$ is said to belong to the Muckenhoupt class $A_p$, $1<p<\infty$, if
$$
[\omega]_{A_{p}}:=\sup_{Q} \langle\omega\rangle_Q \langle\omega^{-\frac{p'}{p}}\rangle_Q^{\frac{p}{p'}}<\infty,$$
where the supremum is taken over all cubes $Q\subset\mathbb{R}^{n}$, and where $\frac{1}{p}+\frac{1}{p'}=1$. By $L^p(\omega)$ we denote the set of measurable functions $f$ that satisfy
\begin{equation}
\|f\|_{L^p(\omega)}=\left(\int_{\mathbb{R}^{n}}|f(x)|^{p}\omega(x)\mathrm{d}x\right)^{\frac{1}{p}}<\infty.\label{eq:tat}
\end{equation}
The quantity  $\|f\|_{L^{p}(\omega)}$ defines a complete norm in  $L^{p}(\omega)$.

\begin{definition}
Suppose that $f\in L_{loc}^{1}(\mathbb{R}^{n})$ and $B\in\mathbb{R}^{n}$ is a ball.  For $a>0$, let
$$\mathcal{M}(f,B)=\frac{1}{|B|}\int_{B}|f(x)-\langle f\rangle_{B}|dx~\mbox{for any ball}\  B\subset\mathbb{R}^{n},$$
and $\mathcal{M}_{a}(f)=\sup_{|B|=a}\mathcal{M}(f,B).$
We say
\begin{enumerate}
  \item the function $f$ is said to belong to $BMO(\mathbb{R}^{n})$ if there exists a constant $C>0$ such that $\|f\|_{BMO}:=sup_{a>0}\mathcal{M}_{a}\leq C$, and
  \item a function $f$  is said to belong to $VMO(\mathbb{R}^{n})$ if  $f\in BMO(\mathbb{R}^{n})$,
$$\lim_{a\rightarrow0}\mathcal{M}_{a}(f)=0 \ \mbox{and}  \lim_{a\rightarrow +\infty}\mathcal{M}_{a}(f)=0.$$
\end{enumerate}
\end{definition}

\begin{rem}\label{rem2.2}
  $VMO$ space concides with $CMO$ space, where $CMO$ space denotes the closure of $C_{c}^{\infty}$ in the $BMO$ topology.

\end{rem}

We need  the following sufficient condition for compactness in $L^{p}(\omega)$, $\omega\in A_p$ and $1<p<\infty$.
This lemma  was established in \cite[Theorem 5]{Clop-Cruz-2013-Ann.Fenn.Math}.

%
%
%

\begin{lemma}\label{lem:FreKol}
\cite{Clop-Cruz-2013-Ann.Fenn.Math}Let $p\in(1,\infty)$, $\omega\in A_{p}$, and let $\mathfrak{F\subset}L^{p}(\omega)$. Then $\mathfrak{F}$ is totally bounded if it satisfies the next three conditions:
\begin{enumerate}
\item $\mathfrak{F}$ is uniformly bounded, i.e. $\sup_{f\in\mathfrak{F}}\|f\|_{L^p(\omega)}<\infty$. \label{unif}
\item $\mathfrak{F}$ is uniformly equicontinuous, i.e.\label{equic}
$\sup_{f\in\mathfrak{F}}\|f(\cdot+h)-f(\cdot)\|_{L^{p}(\omega)}\xrightarrow{h\to0}0.$
\item $\mathfrak{F}$ uniformly vanishes at infinity,\label{decae} i.e. $\sup_{f\in\mathfrak{F}}\|f-\chi_{Q(0,R)}f\|_{L^{p}(\omega)}\xrightarrow{R\to\infty}0$, where $Q(0,R)$ is the cube with center at the origin and sidelength $R$.
\end{enumerate}
\end{lemma}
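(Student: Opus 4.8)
\textbf{Proof proposal for Lemma \ref{lem:FreKol}.}

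The plan is to verify the three classical Fr\'echet--Kolmogorov-type hypotheses against the definition of total boundedness in the Banach space $L^p(\omega)$, which is complete by the remark following \eqref{eq:tat}; equivalently, given $\eps>0$ we must exhibit a finite $\eps$-net for $\mathfrak{F}$. The strategy is a two-step truncation: first replace every $f\in\mathfrak F$ by $\chi_{Q(0,R)}f$ for $R$ large, using condition \eqref{decae} to control the error by $\eps/3$ uniformly in $f$; then mollify the truncated family by convolution with a smooth bump $\varphi_\delta$, using condition \eqref{equic} together with Minkowski's integral inequality to make $\|\varphi_\delta*g-g\|_{L^p(\omega)}\le\eps/3$ uniformly, where $g=\chi_{Q(0,R)}f$. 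The key point here is that the weighted modulus of continuity of $g$ is dominated by that of $f$ on a slightly enlarged cube, so the equicontinuity passes to the truncated family with a constant depending only on $R$ and $\eps$, not on $f$.

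After these two reductions, the remaining family $\mathfrak G_{R,\delta}=\{\varphi_\delta*(\chi_{Q(0,R)}f):f\in\mathfrak F\}$ consists of smooth functions supported in a fixed compact cube $Q'=Q(0,R+1)$, and I claim it is precompact in the unweighted uniform norm. For this I would first pass from $L^p(\omega)$ bounds to ordinary $L^p$ bounds on $Q'$: since $\omega\in A_p$, we have $\omega^{-1}\in L^1_{loc}$ with the reverse-H\"older-type control coming from $[\omega]_{A_p}<\infty$, hence $\int_{Q'}|h|\le \big(\int_{Q'}|h|^p\omega\big)^{1/p}\big(\int_{Q'}\omega^{-p'/p}\big)^{1/p'}\lesssim_{R}\|h\|_{L^p(\omega)}$; applying this to $h=\chi_{Q(0,R)}f$ and its translates controls both $\sup_{x}|\varphi_\delta*(\chi_{Q(0,R)}f)(x)|$ and the gradient $\sup_x|\nabla\varphi_\delta*(\chi_{Q(0,R)}f)(x)|$ by a constant depending on $R$, $\delta$, and $\sup_{f}\|f\|_{L^p(\omega)}$, which is finite by \eqref{unif}. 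Thus $\mathfrak G_{R,\delta}$ is uniformly bounded and uniformly Lipschitz, hence equicontinuous, and supported in the fixed compact set $Q'$, so by the Arzel\`a--Ascoli theorem it is totally bounded in $C(Q')$, hence admits a finite $\eta$-net in the uniform norm.

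Finally I convert a uniform $\eta$-net on $Q'$ into an $L^p(\omega)$ $\eps/3$-net: if $\|u-v\|_{L^\infty(Q')}\le\eta$ and both are supported in $Q'$, then $\|u-v\|_{L^p(\omega)}\le\eta\,\omega(Q')^{1/p}$, so choosing $\eta=\eps/(3\,\omega(Q')^{1/p})$ does it (note $\omega(Q')<\infty$ since $\omega\in L^1_{loc}$). Chaining the three approximations $f\rightsquigarrow \chi_{Q(0,R)}f\rightsquigarrow \varphi_\delta*(\chi_{Q(0,R)}f)\rightsquigarrow$ (nearest net element) yields a finite $\eps$-net for $\mathfrak F$, proving total boundedness. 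The main obstacle, and the only place the hypothesis $\omega\in A_p$ is essential rather than cosmetic, is the passage between weighted and unweighted local integrability in the second and third paragraphs: one must check that $\int_{Q'}\omega^{-p'/p}<\infty$ and $\omega(Q')<\infty$ and that the constants produced depend only on $R$ (through $[\omega]_{A_p}$ and $Q'$), so that the net size is genuinely uniform over $\mathfrak F$; everything else is the standard mollification-plus-Arzel\`a--Ascoli argument.
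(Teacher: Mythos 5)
The paper itself does not prove this lemma; it cites it directly from \cite[Theorem 5]{Clop-Cruz-2013-Ann.Fenn.Math}. Your overall strategy --- truncate, mollify, apply Arzel\`a--Ascoli, and use $\omega\in A_p$ to pass between the weighted norm and unweighted local integrability --- is the correct and standard Fr\'echet--Kolmogorov argument and would serve as a proof. However, there is a genuine gap in your mollification step. You assert that the weighted modulus of continuity of $g=\chi_{Q(0,R)}f$ is ``dominated by that of $f$ on a slightly enlarged cube,'' so that equicontinuity of the truncated family follows from condition (2) alone. This is false: hard truncation introduces a boundary term. Writing
\[
g(x+h)-g(x)=\chi_{Q(0,R)}(x+h)\bigl[f(x+h)-f(x)\bigr]+\bigl[\chi_{Q(0,R)}(x+h)-\chi_{Q(0,R)}(x)\bigr]f(x),
\]
only the first term is controlled by $\|f(\cdot+h)-f\|_{L^p(\omega)}$. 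The second has $L^p(\omega)$ norm $\bigl(\int_{S_h}|f|^p\omega\bigr)^{1/p}$, where $S_h=(Q(0,R)-h)\,\triangle\, Q(0,R)$ is a band of width $\lesssim|h|$ around $\partial Q(0,R)$; this is $f$ itself on a thin set, not a difference of translates, and the modulus of continuity of $f$ says nothing about it.

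The fix is easy but uses condition (3), which you did not invoke here: since $Q(0,R-2|h|)+h\subset Q(0,R)$, one has $S_h\subset\mathbb R^n\setminus Q(0,R-2|h|)$, so the boundary term is at most $\sup_{f\in\mathfrak F}\|f-\chi_{Q(0,R-2|h|)}f\|_{L^p(\omega)}$, which is uniformly small once $R$ is taken large and $|h|<1$. Alternatively, replace $\chi_{Q(0,R)}$ by a smooth cutoff $\phi_R$ with $\phi_R=1$ on $Q(0,R)$, ${\rm supp}\,\phi_R\subset Q(0,2R)$ and $|\nabla\phi_R|\le C$; then the boundary term is bounded by $C|h|\,\|f\|_{L^p(\omega)}$ and is controlled by condition (1) alone, while $\|f-\phi_R f\|_{L^p(\omega)}\le\|f-\chi_{Q(0,R)}f\|_{L^p(\omega)}$ still handles the truncation error. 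With either correction, the rest of your argument --- the $A_p$-based passage to $C(Q')$ bounds for Arzel\`a--Ascoli and the conversion of a uniform $\eta$-net into a weighted $\eps/3$-net via $\omega(Q')<\infty$ --- is sound.
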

%
Technically, by Remark \ref{rem2.2}, we can approximate
VMO function by ${C}_{0}^{\infty}(\mathbb{R}^{n})$ function. More precisely, we have the following lemma.

\begin{lemma}\label{lem:sl}
For any $b\in {\rm VMO}(\mathbb R^n)$, we can approximate the function $b$ by functions $b_{j}\in {  C}_{c}^{\infty}(\mathbb{R}^{n})$ in the $BMO$ norm, such that the following is satisfying
\begin{equation}\label{eq:bj}
\|[b,T]f-[b_{j}, T]f\|_{L^{p}(\omega)}\to 0,\hspace{1cm}\text{ as } j\to \infty.
\end{equation}
\end{lemma}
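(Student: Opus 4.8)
The plan is to use the well-known operator-norm bound for the commutator $[b,T]$ on $L^p(\omega)$, namely that there is a constant $C = C(n,p,[\omega]_{A_p},\theta)$ such that $\|[b,T]f\|_{L^p(\omega)} \le C \|b\|_{BMO}\|f\|_{L^p(\omega)}$ for all $f \in L^p(\omega)$. This is the weighted boundedness of the commutator recorded in the introduction (as a consequence of \cite{Alvarez} together with \cite{Lerner}), and it holds uniformly over $b$ in a fixed $BMO$ ball. The key observation is that the map $b \mapsto [b,T]$ is linear in $b$, so $[b,T]f - [b_j,T]f = [b - b_j, T]f$.

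First I would invoke Remark \ref{rem2.2}: since $VMO(\mathbb{R}^n)$ coincides with $CMO(\mathbb{R}^n)$, the closure of $C_c^\infty(\mathbb{R}^n)$ in the $BMO$ norm, any $b \in VMO(\mathbb{R}^n)$ is the $BMO$-limit of a sequence $b_j \in C_c^\infty(\mathbb{R}^n)$, i.e. $\|b - b_j\|_{BMO} \to 0$ as $j \to \infty$. Then, for each fixed $f \in L^p(\omega)$, I would apply the boundedness estimate to the difference $b - b_j$:
\begin{equation*}
\|[b,T]f - [b_j,T]f\|_{L^p(\omega)} = \|[b - b_j, T]f\|_{L^p(\omega)} \le C\|b - b_j\|_{BMO}\|f\|_{L^p(\omega)} \to 0.
\end{equation*}
This immediately gives \eqref{eq:bj}, and in fact shows the convergence is uniform over $f$ in bounded subsets of $L^p(\omega)$, i.e. $[b_j,T] \to [b,T]$ in operator norm on $L^p(\omega)$.

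There is really no serious obstacle here; the lemma is essentially a restatement of the continuity of $b \mapsto [b,T]$ as a bounded-operator-valued map, combined with density. The only point that requires a little care is making sure the weighted boundedness constant $C$ depends only on $n$, $p$, $[\omega]_{A_p}$, and the Dini constant of $\theta$ — and in particular not on $b$ — so that it may be pulled outside as a fixed constant while $\|b - b_j\|_{BMO}$ carries the decay; this is exactly what the cited weighted estimates provide. One should also note that $b_j \in C_c^\infty \subset BMO$, so each $[b_j,T]$ is a well-defined bounded operator on $L^p(\omega)$, and that the commutator is a priori defined on the dense subspace of smooth compactly supported $f$ and then extended by continuity, so the identity $[b,T]f - [b_j,T]f = [b-b_j,T]f$ and the estimate above persist for all $f \in L^p(\omega)$ by a routine density argument.
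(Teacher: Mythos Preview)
Your proof is correct and follows exactly the intended approach: use Remark~\ref{rem2.2} to approximate $b$ by $b_j\in C_c^\infty$ in $BMO$, use the linearity $[b,T]f-[b_j,T]f=[b-b_j,T]f$, and apply the weighted commutator bound $\|[b-b_j,T]f\|_{L^p(\omega)}\le C\|b-b_j\|_{BMO}\|f\|_{L^p(\omega)}$ from \cite{Alvarez}. The paper does not write out a proof of this lemma, but the identical argument appears in the proof of the parallel Lemma~4.2 for $[b,T^*]$, so your write-up matches the paper's method.
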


%
Suppose that $\psi: [0,\infty)\to [0,1]$ satisfy (a)${\rm supp}\, \psi=\{t:  t  \geq\frac{1}{2}\}$, (b) $\psi(t)=1$ when $ t>1$ and (c)$| \psi'|\leq C$, when $\frac{1}{2}<t<1$. Then for every $\eta>0$ small enough, let us take a continuous function $K^\eta$ defined on $\mathbb{R}^{n}\times\mathbb{R}^{n}$ as
$$
K^{\eta}(x,y)=K(x,y)\psi({| x-y|}/{\eta}).
$$
  We can find that $K^{\eta}(x,y)$ satisfy
\begin{enumerate}
\item $K^{\eta}(x,y)=K(x,y)$,   if $|x-y|\geq\eta,$
\item $|K^{\eta}(x,y)|\lesssim\frac{1}{|x-y|^{n}}$,   if $\frac{\eta}{2}<|x-y|<\eta,$
\label{enu:-para-2}
\item $K^{\eta}(x,y)=0$ ,   if $|x-y|\leq\frac{\eta}{2}.$
\end{enumerate}
 Now, we denote
$$T^{\eta}f(x)=\int_{\mathbb{R}^{n}}K^{\eta}(x,y)f(y)\mathrm{d}y$$
and
$$[b, T^{\eta}]f(x)=\int_{\mathbb{R}^{n}}(b(x)-b(y))K^{\eta}(x,y)f(y)\mathrm{d}y.$$
The following lemma shows that if $b\in{ C}^1_c(\mathbb R^n)$, the commutators $[b, T^{\eta}]$ approximate $[b, T]$ in the operator norm,
this is quiet the same in \cite[Lemma 7]{Clop-Cruz-2013-Ann.Fenn.Math}. Since the proof is not relies  on the kernel condition,
the proof is also the same, we omit the detail.
\begin{lemma}\label{lema2}
Let $b\in{  C}^1_c(\mathbb R^n)$. There exists a constant $C=C(n,C_0)$ such that
$$
|[b, T ]f(x)-[b, T^{\eta}] f(x)|\leq C\eta|| b||_\infty  Mf(x) \hspace{1cm}x\in \mathbb{R}^{n} \text{a.e.,}
$$
for every $\eta>0$. As a consequence,
$$
\lim_{\eta\to 0}\parallel [b, T^{\eta}] - [b, T ]\parallel_{L^p(\omega)\to L^p(\omega)}= 0,
$$
whenever $\omega\in A_p$ and $1<p<\infty$.
\end{lemma}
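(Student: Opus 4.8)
The plan is to reduce the claim to a single pointwise inequality and then invoke the weighted $L^{p}$ boundedness of the Hardy--Littlewood maximal operator. First I would write the difference of the two commutators as one integral,
\[
[b,T]f(x)-[b,T^{\eta}]f(x)=\int_{\mathbb{R}^{n}}\big(b(x)-b(y)\big)\big(K(x,y)-K^{\eta}(x,y)\big)f(y)\,dy ,
\]
and note that, by the construction of $K^{\eta}$, we have $K(x,y)-K^{\eta}(x,y)=K(x,y)\big(1-\psi(|x-y|/\eta)\big)$; this function vanishes for $|x-y|\ge\eta$ and, since $0\le 1-\psi\le 1$ and $|K(x,y)|\le C_{0}|x-y|^{-n}$, it is bounded by $C_{0}|x-y|^{-n}$ on $\{|x-y|\le\eta\}$. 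In particular only the size condition (1) of the $\theta$-type kernel is used; the Dini/regularity condition (2) plays no role, which is exactly why the argument is the same as in \cite{Clop-Cruz-2013-Ann.Fenn.Math}. Because $b\in C^{1}_{c}(\mathbb{R}^{n})$ is Lipschitz with constant $\|\nabla b\|_{\infty}$, we have $|b(x)-b(y)|\le\|\nabla b\|_{\infty}|x-y|$, so the integrand above is dominated pointwise by $C_{0}\,\|\nabla b\|_{\infty}\,|x-y|^{-(n-1)}\,|f(y)|\,\chi_{\{|x-y|\le\eta\}}(y)$.

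The next step is a dyadic decomposition of the ball $\{\,0<|x-y|\le\eta\,\}$ into the annuli $A_{k}=\{\,2^{-k-1}\eta<|x-y|\le 2^{-k}\eta\,\}$, $k\ge 0$. On $A_{k}$ one has $|x-y|^{-(n-1)}\le 2^{\,n-1}(2^{-k}\eta)^{-(n-1)}$, while $\int_{A_{k}}|f(y)|\,dy\le\int_{B(x,2^{-k}\eta)}|f(y)|\,dy\le c_{n}(2^{-k}\eta)^{n}\,Mf(x)$. Multiplying these two bounds and summing the resulting geometric series $\sum_{k\ge0}2^{-k}\eta$ gives
\[
|[b,T]f(x)-[b,T^{\eta}]f(x)|\le C(n,C_{0})\,\eta\,\|\nabla b\|_{\infty}\,Mf(x)\qquad\text{for a.e. }x\in\mathbb{R}^{n},
\]
which is the asserted pointwise bound. (Here $\|\nabla b\|_{\infty}$ appears in place of the $\|b\|_{\infty}$ written in the statement; since $b\in C^{1}_{c}$ both are finite constants, this changes nothing in the applications, and indeed the size condition alone forces one to exploit the Lipschitz smoothness of $b$ to absorb the singularity of $K$.)

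Finally, for the operator-norm statement, I would simply take the $L^{p}(\omega)$ norm of both sides of the pointwise inequality. Since $\omega\in A_{p}$ with $1<p<\infty$, the Muckenhoupt theorem gives $\|Mf\|_{L^{p}(\omega)}\le C(n,p,[\omega]_{A_{p}})\|f\|_{L^{p}(\omega)}$, whence
\[
\|[b,T^{\eta}]f-[b,T]f\|_{L^{p}(\omega)}\le C(n,C_{0},p,[\omega]_{A_{p}})\,\eta\,\|\nabla b\|_{\infty}\,\|f\|_{L^{p}(\omega)} ,
\]
and letting $\eta\to 0$ yields $\|[b,T^{\eta}]-[b,T]\|_{L^{p}(\omega)\to L^{p}(\omega)}\to 0$. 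I do not expect a real obstacle here: the only points requiring care are organizing the annular sum so that exactly one power of $\eta$ is gained, and treating the transitional shell $\tfrac{\eta}{2}<|x-y|<\eta$ (where $K^{\eta}\ne K$) by the crude bound $|1-\psi|\le 1$ rather than looking for cancellation there.
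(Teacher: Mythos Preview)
Your argument is correct and is exactly the standard one the paper is pointing to: the authors omit the proof entirely, saying only that it ``is quiet the same in \cite[Lemma 7]{Clop-Cruz-2013-Ann.Fenn.Math}'' and ``does not rely on the kernel condition.'' Your observation that the natural pointwise bound carries $\|\nabla b\|_{\infty}$ rather than the $\|b\|_{\infty}$ printed in the statement is also right---with only the size bound $|K|\le C_{0}|x-y|^{-n}$ one must use $|b(x)-b(y)|\le\|\nabla b\|_{\infty}|x-y|$ to make the integral over $\{|x-y|\le\eta\}$ converge, so the printed $\|b\|_{\infty}$ is evidently a typographical slip (the same $\|\nabla b\|_{\infty}$ appears in the analogous estimates later in Sections~3 and~4 of the paper).
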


\section{Proof of  part (1) in  Theorem \ref{thm:compact}}

%
We mention that $[b, T]$ is bounded on $L^p(\omega)$ for $\omega\in L^p(\omega) (1<p<\infty)$ in Section 1. Now we denote
$$\mathfrak{F}=\{[b,T^{\eta}]f; f\in L^p(\omega), \parallel f\parallel_{L^p(\omega)}\leq 1, b\in C_{c}^{\infty}(\mathbb R^{n}) \}.$$
Thanks to Lemma 2.2 and 2.3, to prove  part (1) of  Theorem \ref{thm:compact}, it is suffice to check that $\mathfrak{F}$ satisfying the
 condition (2) and (3) in Lemma 2.1.
This is to say, we need to prove the following two equations,
\begin{equation}\label{eq:key1}
\lim_{h\to 0}\sup_{f\in\mathfrak{F}}\parallel [b,T^{\eta}]f(\cdot)-[b,T^{\eta}]f(\cdot+h)\parallel_{L^{p}(\omega)}=0,
\end{equation}
and
\begin{equation}\label{eq:key2}
 \lim_{R\to \infty}\sup_{f\in\mathfrak{F}}\ \left(\int_{|x|>R }|[b,T^{\eta}]f(x)|^{p}\omega(x)\mathrm{d}x\right)^\frac1p=0.
\end{equation}

To prove \eqref{eq:key1}.  Indeed, for $b\in C_{c}^{\infty}(\mathbb R^{n})$
\begin{align*}
[b,T^{\eta}]&f(x)-[b,T^{\eta}]f(x+h) \\
&=b(x)T^{\eta}f(x)-T^{\eta}(bf)(x)-b(x+h)T^{\eta}f(x+h)+T^{\eta}(bf)(x+h)\\
&=b(x)T^{\eta}f(x)-T^{\eta}(bf)(x)-b(x+h)T^{\eta}f(x+h)\\
&\quad \quad +T^{\eta}(bf)(x+h)-b(x+h)T^{\eta}f(x)+b(x+h)T^{\eta}f(x)\\
&=:A(x,h)+B(x,h),
\end{align*}
where
\begin{align*}
A(x,h)&=b(x)T^{\eta}f(x)-b(x+h)T^{\eta}f(x)\\
&=(b(x)-b(x+h))\int_{\mathbb{R}^{n}}K^{\eta}(x,y)f(y)\mathrm{d}y
\end{align*}
and
\begin{align*}
B(x,h)&=b(x+h)T^{\eta}f(x)-T^{\eta}(bf)(x)-b(x+h)T^{\eta}f(x+h)+T^{\eta}(bf)(x+h)\\
&=\int_{\mathbb{R}^{n}}(b(x+h)-b(y))(K^{\eta}(x,y)-K^{\eta}(x+h,y))f(y)\mathrm{d}y\\
&=\int_{\mathbb{R}^{n}}(b(x+h)-b(y))\left[K(x,y)\psi\left(\frac{| x-y|}{\eta}\right)-K(x+h,y)\psi\left(\frac{| x+h-y|}{\eta}\right)\right]f(y)\mathrm{d}y.
\end{align*}
For $A(x,h)$, using the regularity of the function $b$ and the definition of the operator $T^{\ast}$,
\begin{align*}
|A(x,h)|
& \leq \|\nabla b\|_{\infty}|h|\left |\int_{|x-y|>\frac{\eta}{2}}(K^{\eta}(x,y)-K(x,y))f(y)\mathrm{d}y+\int_{|x-y|>\frac{\eta}{2}}K(x,y)f(y)\mathrm{d}y\right |\\
&\leq\|\nabla b\|_{\infty}|h|\left(\int_{|x-y|>\frac{\eta}{2}}| K^{\eta}(x,y)-K(x,y)|| f(y)|\mathrm{d}y+T^{\ast}f(x)\right)\\
&\leq \|\nabla b\|_{\infty}|h|\left(\int_{\eta\geq |x-y|>\frac{\eta}{2}}\frac{|f(y)|}{|x-y|^n}\mathrm{d}y+T^{\ast}f(x)\right)\\
& \leq \|\nabla b\|_{\infty}| h|(C\,Mf(x)+T^{\ast} f(x))
\end{align*}
for some constant $C>0$ independent of $h$. Therefore
 \begin{equation}
(\int|A(x,h)|^p\omega(x)\mathrm{d}x)\leq
C|h|\|f\|_{L^{p}(\omega)},\label{eq:comp1}\end{equation}
for $C$ independent of $f$ and $h$. Here we used the boundedness of $M$ and $T^{\ast}$ on $L^p(\omega)$ (see \cite{Hytonen-Roncal-Tapoila-2017-israelMJ})

Suppose $|h|<\frac{\eta}{4}$, then
$$
|B(x,h)|\leq|B_1(x,h)|+|B_2(x,h)|,
$$
where
$$|B_1(x,h)|=\left|\int_{\mathbb{R}^{n}}(b(x+h)-b(y))(K(x,y)-K(x+h,y))\psi(\frac{| x-y|}{\eta})f(y)\mathrm{d}y\right|$$
and
$$|B_2(x,h)|=\left|\int_{\mathbb{R}^{n}}(b(x+h)-b(y))K(x+h,y)(\psi(\frac{| x-y|}{\eta})-\psi(\frac{| x+h-y|}{\eta}))f(y)\mathrm{d}y\right|.$$
For $|B_1(x,h)|$, we have
\begin{align*}
|B_1(x,h)|&\leq\int_{| x-y|>\frac{\eta}{2}}| b(x+h)-b(y)|| K(x,y)-K(x+h,y)| | f(y)|\mathrm{d}y\\
&\leq C\| b\|_{\infty}\int_{| x-y|>\frac{\eta}{2}}\theta\left(\frac{| h|}{| x-y|}\right)\frac{1}{| x-y|^{n}}| f(y)|\mathrm{d}y\\
&= C\|b\|_{\infty}\int_{| y|>\frac{\eta}{2}}\theta\left(\frac{| h|}{| y|}\right)\frac{1}{| y|^{n}}| f(x-y)|\mathrm{d}y\\
&= C\|b\|_{\infty}\sum_{k=1}^{\infty}\int_{\frac{\eta}{2}2^{k-1}<| y|<\frac{\eta}{2}2^{k}}\theta\left(\frac{| h|}{| y|}\right)\frac{1}{| y|^{n}}| f(x-y)|\mathrm{d}y\\
&\leq  C\|b\|_{\infty}\sum_{k=1}^{\infty}\theta\left(\frac{| h|}{\frac{\eta}{2}2^{k-1}}\right)\frac{1}{(\frac{\eta}{2}2^{k-1})^{n}}\int_{| y|<\frac{\eta}{2}2^{k}}| f(x-y)|\mathrm{d}y\\
&= C\|b\|_{\infty}\sum_{k=1}^{\infty}\theta\left(\frac{| h|}{\frac{\eta}{2}2^{k-1}}\right)\frac{2^{n}}{(\frac{\eta}{2}2^{k})^{n}}\int_{| y|<\frac{\eta}{2}2^{k}}| f(x-y)|\mathrm{d}y\\
&= C2^{n}\|b\|_{\infty}Mf(x)\sum_{k=1}^{\infty}\theta\left(\frac{4| h|}{\eta\cdot2^{k}}\right)\\
&\leq C\|b\|_{\infty}Mf(x)\int_{0}^{\frac{4|h|}{\eta}}\theta(t)\mathrm{d}t.
\end{align*}
So, we can get
\begin{align*}
(\int_{R^n}|B_1(x,h)|^{p}\omega(x)\mathrm{d}x)^{\frac{1}{p}}\leq  C\| b\|_{\infty}\| f\|_{L^{p}(\omega)}\int_{0}^{\frac{4| h|}{\eta}}\theta(t)\frac{1}{t}\mathrm{d}t
\end{align*}
For $|B_2(x,h)|$, we know
\begin{align*}
|B_2(x,h)|&\leq|\int_{| x-y|>\frac{\eta}{4}}(b(x+h)-b(y))K(x+h,y)f(y)\mathrm{d}y|\\
&\leq C\| \nabla b\|_{\infty}\int_{| x-y|>\frac{\eta}{4}}\frac{1}{| x+h-y|^{n-1}}| f(y)|\mathrm{d}y\\
&\leq C\| \nabla b\|_{\infty}\sum_{j=1}^{+\infty}\int_{2^{-j-1}\eta<| x-y|<2^{-j}\eta}\frac{1}{| x+h-y|^{n-1}}| f(y)|\mathrm{d}y\\
&\leq C\| \nabla b\|_{\infty}\sum_{j=1}^{+\infty}\frac{1}{(2^{-j}\eta)^{n-1}}\int_{| x-y|<2^{-j}\eta}| f(y)|\mathrm{d}y\\
&\leq C\| \nabla b\|_{\infty}\sum_{j=1}^{+\infty}2^{-j}\eta\frac{1}{(2^{-j}\eta)^{n}}\int_{| x-y|<2^{-j}\eta}| f(y)|\mathrm{d}y\\
&\leq C\| \nabla b\|_{\infty}M(f)(x)\sum_{j=1}^{+\infty}2^{-j}\eta \\
&\leq C\eta\| \nabla b\|_{\infty}M(f)(x).
\end{align*}
Thus
\begin{align*}
(\int_{R^n}|B_2(x,h)|^{p}\omega(x)\mathrm{d}x)^{\frac{1}{p}}&\leq C\eta\| \nabla b\|_{\infty}(\int_{R^n} M(f)(x)^{p}\omega(x)\mathrm{d}x)^{\frac{1}{p}}\\
&\leq C\eta\|\nabla b\|_{\infty}\|f\|_{L^{p}(\omega)}
\end{align*}
$$
\lim_{h\to 0}\sup_{f\in\mathfrak{F}}\| [b,T^{\eta}]f(\cdot+h)-[b,T^{\eta}]f(\cdot)\|_{L^{p}(\omega)}=0.
$$


Finally, we show the decay at infinity of the elements of $\mathfrak{F}$. Let $x$ be such that $|x|>R>2R_0$, suppose that ${\rm supp}\,b\subset B(0,R_0)$. Then, $b\in C_{c}^{\infty}$, $x\not\in\mathrm{supp}\, b$, and
\begin{align*}
|[b,T^{\eta}]f(x)|
& =\left|\int_{\mathbb{R}^{n}}(b(x)-b(y))K^{\eta}(x,y)f(y)\mathrm{d}y\right| \\
&\leq C_{0}\|b\|_{\infty}\int_{\mathrm{supp}\, b}\frac{|f(y)|}{|x-y|^{n}}\mathrm{d}y\\
 &\leq \frac{C\| b\|_{\infty}}{|x|^{n}}\int_{\mathrm{supp}\, b} |f(y)|\,\mathrm{d}y \\
 &\leq \frac{C \|b\|_{\infty}}{|x|^{n}}\,\|f\|_{L^{p}(\omega)}\,\left(\int_{\mathrm{supp}\, b}\omega(y)^{-\frac{p'}{p}}dy\right)^{\frac{1}{p'}},
\end{align*}
whence
\begin{align}\label{decay}
\left(\int_{|x|>R }|[b,T^{\eta}]f(x)|^{p}\omega(x)\mathrm{d}x\right)^\frac1p\leq C\|b\|_\infty\|f\|_{L^{p}(\omega)}\left(\int_{|x|>R}\frac{\omega(x)}{|x|^{np}}\,\mathrm{d}x\right)^\frac{1}{p}.
\end{align}
By \cite[Lemma 2.2]{GarRub}, we have
\begin{equation}\label{eq:deca1}
\int_{|x|>R}\frac{\omega(x)}{|x|^{np}}\mathrm{d}x \leq \sum_{j=1}^{\infty}(2^{j-1}R)^{-np}(2^{j}R )^{nq}\omega(B(0,1))=\frac{C}{R^{n(p-q)}}<\infty.
\end{equation}
The right hand side above converges to $0$ as $R\to\infty$, due to \eqref{eq:deca1}.
%
%
%

Thus the proof of  part (1) in  Theorem \ref{thm:compact} follows.

\section{Proof of  part (2) in  Theorem \ref{thm:compact} }

\begin{lemma}
  Suppose that $b\in BMO$, $w\in A_p$ and $1<p<\infty$, then
  $$
  \|[b,T^*]f\|_{L^p(\omega)}\leq C \|b\|_{BMO}\|f\|_{L^p(\omega)}.
  $$
\end{lemma}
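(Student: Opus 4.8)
The plan is to reduce the maximal commutator $[b,T^*]$ to a pointwise-controllable object via a good-$\lambda$ or sharp-maximal-function estimate, following the circle of ideas in \cite{huguoen-lin-yang-2008-aaa}. First I would recall the standard pointwise bound relating the maximal singular integral to the Hardy--Littlewood maximal operator: since $T$ has a $\theta$-type kernel with $\theta$ Dini, the Cotlar-type inequality $T^{*}_{CZ}f(x)\le C\bigl(M(Tf)(x)+Mf(x)\bigr)$ holds, where $T^{*}_{CZ}$ is the (unweighted-symbol) maximal truncation. The key algebraic observation is the splitting, for each $\epsilon>0$,
\begin{align*}
T_{\epsilon}f(x)&=\int_{|x-y|\ge\epsilon}(b(x)-b(y))K(x,y)f(y)\,dy\\
&=(b(x)-\langle b\rangle_{B})\,T^{*}_{CZ,\epsilon}f(x)-T^{*}_{CZ,\epsilon}\bigl((b-\langle b\rangle_{B})f\bigr)(x),
\end{align*}
taken over the ball $B=B(x,\epsilon)$ (or a fixed ball when estimating the sharp maximal function on it), so that $[b,T^{*}]f$ is dominated pointwise by a sum of a term of the form $|b(x)-\langle b\rangle_{B}|\,T^{*}_{CZ}f(x)$ and a maximal-truncation of $(b-\langle b\rangle_{B})f$.

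Next I would estimate the Fefferman--Stein sharp maximal function $M^{\#}\bigl([b,T^{*}]f\bigr)(x)$. On a fixed cube $Q\ni x$, write $f=f_{1}+f_{2}$ with $f_{1}=f\chi_{2Q}$, $f_{2}=f\chi_{(2Q)^{c}}$, and $b-\langle b\rangle_{2Q}$ in place of $b$. The local part $f_{1}$ is handled by the weak-$(1,1)$ or $L^{r}$ bound for $T^{*}_{CZ}$ together with Kolmogorov's inequality and Hölder, producing a factor $\|b\|_{BMO}M_{r}f(x)$ for some $r>1$ close to $1$; the far part $f_{2}$ is where the Dini condition enters, via the smoothness estimate (2) on the kernel: one shows $\bigl|T^{*}_{CZ}f_{2}(x)-T^{*}_{CZ}f_{2}(x')\bigr|$ and the analogous difference with the $b$-weight are controlled by $\sum_{k}\theta(2^{-k})\langle|f|\rangle_{2^{k}Q}$, which sums by the Dini hypothesis, again yielding $C\|b\|_{BMO}M_{r}f(x)$. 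Collecting, one obtains
\begin{equation*}
M^{\#}\bigl([b,T^{*}]f\bigr)(x)\le C\|b\|_{BMO}\,M_{r}f(x),\qquad 1<r<p.
\end{equation*}

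Finally I would feed this into the weighted Fefferman--Stein inequality: for $\omega\in A_{p}$ and $[b,T^{*}]f\in L^{p}(\omega)$ a priori (say for $f$ bounded with compact support, then extend by density),
\begin{equation*}
\|[b,T^{*}]f\|_{L^{p}(\omega)}\le C\|M^{\#}\bigl([b,T^{*}]f\bigr)\|_{L^{p}(\omega)}\le C\|b\|_{BMO}\|M_{r}f\|_{L^{p}(\omega)}\le C\|b\|_{BMO}\|f\|_{L^{p}(\omega)},
\end{equation*}
where the last step uses $\omega\in A_{p}\subset A_{p/r}$ for $r$ chosen close enough to $1$, so $M_{r}$ is bounded on $L^{p}(\omega)$. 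The main obstacle I anticipate is twofold: first, justifying the a priori finiteness of $\|[b,T^{*}]f\|_{L^{p}(\omega)}$ needed to legitimately apply the sharp-maximal-function argument (handled by truncating $b$, approximating $f$, and using the already-established boundedness of $T^{*}_{CZ}$ plus the pointwise splitting); and second, carefully verifying that the Dini condition — rather than a Hölder modulus — suffices to sum the kernel-difference series uniformly in the truncation parameter $\epsilon$, since the supremum over $\epsilon$ must be taken inside the estimate. Both of these are exactly the points where \cite{huguoen-lin-yang-2008-aaa} provides the needed machinery, so I would cite their sharp-maximal estimate for maximal commutators and adapt the kernel bookkeeping to the $\theta$-type setting.
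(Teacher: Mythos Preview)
Your approach is sound but takes a genuinely different route from the paper. The paper does \emph{not} estimate the sharp maximal function of $[b,T^{*}]f$ directly. Instead it first establishes the pointwise splitting
\[
[b,T^{*}]f(x)\le [b,T^{**}]f(x)+M_{b}f(x),
\]
where $[b,T^{**}]f(x)=\sup_{j\in\mathbb Z}\bigl|\int_{|x-y|>2^{j}}(b(x)-b(y))K(x,y)f(y)\,dy\bigr|$ is the dyadic maximal commutator and $M_{b}f(x)=\sup_{\varepsilon>0}\varepsilon^{-n}\int_{|x-y|<\varepsilon}|b(x)-b(y)||f(y)|\,dy$. The two pieces are then handled by completely different mechanisms: for $[b,T^{**}]$ the paper proves a Cotlar inequality to show the family $\{T_{j}\}$ is bounded from $L^{p}(\omega)$ into $L^{p}(l^{\infty},\omega)$ and then invokes the vector-valued commutator machinery of \'{A}lvarez--Bagby--Kurtz--P\'{e}rez/Cruz-Uribe--P\'{e}rez; for $M_{b}$ it applies the sharp-function estimate $M^{\sharp}_{q}(\widetilde{M}_{b}f)\le C\|b\|_{BMO}\bigl(M_{s}(\widetilde{M}f)+M_{L(\log L)}f\bigr)$ from \cite{huguoen-lin-yang-2008-aaa} together with $M_{L(\log L)}\sim M^{2}$. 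Your proposal bypasses this decomposition entirely and runs the Fefferman--Stein argument on $[b,T^{*}]f$ in one shot; this is more self-contained and makes the role of the Dini condition transparent, whereas the paper's route is more modular and lets one quote existing black-box results almost verbatim. One small caution: the sharp-function bound you write, $M^{\#}([b,T^{*}]f)\le C\|b\|_{BMO}M_{r}f$, is slightly optimistic as stated --- the standard estimate produces an extra term $M_{r}(T^{*}_{CZ}f)$ on the right, which one then removes by a second application of the Fefferman--Stein inequality and the Cotlar bound for $T^{*}_{CZ}$; and because $[b,T^{*}]$ is nonlinear you will need $M^{\#}_{\delta}$ with $\delta<1$ rather than $M^{\#}$ to compare against a suitable constant on each cube.
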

\begin{proof}
In $\mathbb{R}^{n}$ we define the unit cube, open on the right, to be the set $[0,1)^{n}$, and we let $\Omega_{0}$ be the collection of cubes in $\mathbb{R}^{n}$ which are congruent to $[0,1)^{n}$ and whose vertices lie on the lattice $Z^{n}$. If we dilate this family of cubes by a factor of $2^{-k}$ we get the collection $\Omega_{k}$, $k\in Z$; that is, $\Omega_{k}$ is the family of cubes, open on the right, whose vertices are adjacent points of the lattice $(2^{-k}Z)^{n}$. The cubes in $\bigcup_{k}\Omega_{k}$ are called dyadic cubes.
Give a function $f\in L_{loc}^{1}(\mathbb{R}^{n})$, define
$$E_{k}f(x)=\sum_{Q\in\Omega_{k}}(\frac{1}{| Q|}\int_{Q}f)\chi_{Q}(x);$$
$E_{k}f$ is the conditional expectation of $f$ with respect to the $\sigma$-algebra generated by $\Omega_{k}$. It satisfies the following fundamental identity: if $S$ is the union of cubes in $\Omega_{k}$, then
$$\int_{S}E_{k}f=\int_{S}f.$$
Define the dyadic maximal function by
$$M_{d}f(x)=\sup_{k}| E_{k}f(x)|.$$

We define the sharp maximal function by
$$M^{\sharp}f(x)=\sup_{x\in Q}\frac{1}{| Q|}\int_{Q}| f-\langle f\rangle_{Q}|,$$
where $f\in L_{loc}^{1}(\mathbb{R}^{n})$£¬ the supremum is taken over all cubes $Q$ containing x.\\
We let
$$T_{j}f(x)=\int_{| x-y|>2^{j}} K(x,y)f(y)dy,~~~~~j\in Z,$$
and $[b,T_{j}]f$ is defined as $[b,T]f$, we also define
$$[b,T^{\ast\ast}]f(x)=\sup_{j\in Z} | T_{j,b}f(x)|$$
and
$$M_{b}f(x)=\sup_{\varepsilon>0}r^{-n}\int_{| x-y|<\varepsilon}| b(x)-b(y)|| f(y)| dy$$
It is easy to  check
$$[b,T^{\ast}]f(x)\leq[b,T^{\ast\ast}]f(x)+M_{b}f(x).$$
We will show that, when $\omega\in A_{p}(\mathbb{R}^{n})$, $p\in(1,+\infty)$,
$$\|[b,T^{\ast\ast}]f\|_{L^{p}(\omega)}+\|M_{b}f\|_{L^{p}(\omega)}\leq C\|f\|_{L^{p}(\omega)}.$$
 First, we prove that
$$\|[b,T^{\ast\ast}]f\|_{L^{p}(\omega)}\leq C\parallel f\parallel_{L^{p}(\omega)}.$$
Similar to \cite[Lemma 5.15, p102-104]{duoand}, we can obtain the following Cotlar's inequality: for any $\gamma\in(0,1]$ and
any $f\in C_{c}^{\infty}$,
$$\sup|T_{j}f(x)|\leq C(M(|Tf|^{\gamma})(x)^{1/\gamma}+Mf(x)).$$
As a consequence, for $\omega\in A_{p}(\mathbb{R}^{n})$, $p\in(1,+\infty)$
$$\|\sup|T_{j}f|\|_{L^{p}(\omega)}\leq C\|f\|_{L^{p}(\omega)}.$$
So we proved that $\mathfrak{T}=\{T_{j}\}$ is bounded from $L^{p}(\omega)$ to $L^{p}(l^{\infty},\omega)$, where
$$L^{p}(l^{\infty},\omega)=\{\{f_{j}\}_{j\in Z}:\| \sup_{j\in Z} | T_{j}f|\|_{L^{p}(\omega)}<\infty\}.$$
This result combine the argument in [7], we have
$$\|[b,T^{\ast\ast}]\|_{L^{p}(\omega)}\leq C\|f\|_{L^{p}(\omega)}.$$

Next, we will prove the boundedness of $M_{b}f$, for $0<q<1$,
\begin{align*}
\| M_{b}f\|_{L^{p}(\omega)}&=\left(\int_{R^{n}}(| M_{b}f(x)|^{q})^{\frac{p}{q}}\omega(x)dx\right)^{\frac{1}{p}}\\
&\leq C\left(\int_{R^{n}}[M_{d}( (M_{b}f)^{q})]^{\frac{p}{q}}\omega(x)dx\right)^{\frac{1}{p}}\\
&\leq C\left(\int_{R^{n}}[M^{\sharp}( (M_{b}f)^{q})(x)]^{\frac{p}{q}}\omega(x)dx\right)^{\frac{1}{p}}\\
&= C\left(\int_{R^{n}}[M_{q}^{\sharp}( M_{b}f)(x)]^{p}\omega(x)dx\right)^{\frac{1}{p}}\\
&=C\| M_{q}^{\sharp}(M_{b}f)\|_{L^{p}(\omega)}.
\end{align*}
By \cite[Lemma2.3]{huguoen-lin-yang-2008-aaa} for $0<q<s<1$, we can know that
$$M_{q}^{\sharp}(\widetilde{M}_{b}f)(x)\leq C\parallel b\parallel_{\ast}[M_{s}(\widetilde{M}f)(x)+M_{L (log L)}f(x)],$$
due to  there exists some constant $C\geq1$ such that for all $x\in \mathbb{R}^{n}$
$$ C^{-1}\widetilde{M}_{b}f(x)\leq M_{b}f(x)\leq C\widetilde{M}_{b}f(x)$$
and
$$ C^{-1}\widetilde{M}f(x)\leq Mf(x)\leq C\widetilde{M}f(x),$$
where the definitions of $\widetilde{M}_{b}$ and $\widetilde{M}$ have given in \cite{huguoen-lin-yang-2008-aaa}, because of the limitation of length, no more tautology here. So we can have
$$\| M_{q}^{\sharp}(M_{b}f)\|_{L^{p}(\omega)}\leq C\| b\|_{BMO}[\| M_{s}(\widetilde{M}f)\|_{L^{p}(\omega)}+\|M_{L (log L)}f\|_{L^{p}(\omega)}].$$
In \cite{Javier Duoandikoetxea}, we know $ M_{L (log L)}\sim M^{2}$, so we have that $ M_{L (log L)}:L^{p}(\omega)\rightarrow L^{p}(\omega)$ is bounded, so we only need prove the bounded of $M_{s}(\widetilde{M}f)$, now we give the proof,
\begin{align*}
\| M_{s}(\widetilde{M}f)\|_{L^{p}(\omega)}&=(\int_{R^{n}}(M_{s}(\widetilde{M}f)(x))^{p}\omega(x)dx)^{\frac{1}{p}}\\
&=(\int_{R^{n}}(M((\widetilde{M}f)^{s})(x))^{\frac{p}{s}}\omega(x)dx)^{\frac{1}{p}}\\
&\leq C(\int_{R^{n}}(\widetilde{M}f(x))^{p}\omega(x)dx)^{\frac{1}{p}}\\
&\leq C(\int_{R^{n}}(Mf(x))^{p}\omega(x)dx)^{\frac{1}{p}}\\
&\leq C\| f\|_{L^{p}(\omega)}.
\end{align*}
Thus, we can get
$$\|M_{b}f\|_{L^{p}(\omega)}\leq C\| f\|_{L^{p}(\omega)}.$$
Whence
$$\|[b,T^{\ast}]\|_{L^{p}(\omega)}\leq C\| f\|_{L^{p}(\omega)}.$$

\end{proof}
Now, we will give the proof of $[b,T^{\ast}]$ uniformly vanishes at infinity.
\begin{lemma}
For any $b\in {\rm VMO}(\mathbb R^n)$, there exists $\{b_{j}\} \subset C_{c}^{\infty}(\mathbb{R}^{n})$ and satisfy $b=\lim_{j\rightarrow+\infty}b_{j}$ in $BMO$,  such that the following is satisfying
  $$\|[b,T^{\ast}]f-[b_{j},T^{\ast}]f\|_{L^{P}(\omega)}\rightarrow 0~~~~~as~~j\rightarrow+\infty.$$
\end{lemma}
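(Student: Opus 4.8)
The plan is to reduce the claim to the operator-norm estimate
$\|[b,T^{*}] - [b_{j},T^{*}]\|_{L^{p}(\omega)\to L^{p}(\omega)} \to 0$
and then invoke the boundedness lemma just proved. First I would note the elementary pointwise sublinearity inequality for maximal commutators: for every $f$ and every $x$,
$$
\bigl|[b,T^{*}]f(x) - [b_{j},T^{*}]f(x)\bigr| \le \sup_{\epsilon>0}\Bigl|\int_{|x-y|\ge\epsilon}\bigl[(b-b_{j})(x)-(b-b_{j})(y)\bigr]K(x,y)f(y)\,dy\Bigr| = [\,b-b_{j},\,T^{*}\,]f(x).
$$
This holds because $T_{\epsilon}$ is linear in the symbol, so the difference of the two truncated operators is the truncated operator with symbol $b-b_{j}$, and $|\sup_\epsilon|A_\epsilon| - \sup_\epsilon|B_\epsilon|| \le \sup_\epsilon|A_\epsilon - B_\epsilon|$.

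Next I would apply the boundedness Lemma (the first lemma of Section 4) with symbol $b-b_{j}\in BMO(\mathbb{R}^{n})$, which gives
$$
\bigl\|[b,T^{*}]f-[b_{j},T^{*}]f\bigr\|_{L^{p}(\omega)} \le \bigl\|[\,b-b_{j},T^{*}\,]f\bigr\|_{L^{p}(\omega)} \le C\,\|b-b_{j}\|_{BMO}\,\|f\|_{L^{p}(\omega)}.
$$
Since $b\in VMO(\mathbb{R}^{n})$, by Remark \ref{rem2.2} we may choose $\{b_{j}\}\subset C_{c}^{\infty}(\mathbb{R}^{n})$ with $\|b-b_{j}\|_{BMO}\to 0$; the constant $C$ depends only on $n$, $p$, $[\omega]_{A_p}$ and the kernel constants, not on $j$. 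Taking the supremum over $\|f\|_{L^{p}(\omega)}\le 1$ and letting $j\to\infty$ yields the claim.

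The only genuine subtlety — and the point I would be careful to state precisely rather than wave through — is that the boundedness Lemma must be applied with a constant uniform in the symbol, i.e. the bound is really $C\,\|b-b_{j}\|_{BMO}\|f\|_{L^{p}(\omega)}$ with $C$ independent of $b-b_j$; inspection of that proof (the Cotlar inequality for $\{T_j\}$, the vector-valued bound, and the sharp-maximal-function estimate $M^{\sharp}_{q}(\widetilde M_{b}f)\lesssim \|b\|_{*}[M_{s}(\widetilde M f)+M_{L(\log L)}f]$) shows every step scales linearly in $\|b\|_{BMO}$, so this is fine. A secondary point worth one line: the operator $[b,T^{*}]$ is a priori defined on, say, $C_{c}^{\infty}$ and extended by density using this boundedness, and $[b_{j},T^{*}]$ likewise, so the inequality for $f\in C_c^\infty$ passes to all of $L^{p}(\omega)$ by the usual limiting argument. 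No obstacle of substance remains; this lemma is the easy half of Part (2), the real work being the compactness of $[b_{j},T^{*}]$ for smooth $b_{j}$, handled separately.
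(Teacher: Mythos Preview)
Your proof is correct and follows exactly the paper's approach: the pointwise domination $|[b,T^{*}]f-[b_{j},T^{*}]f|\le [b-b_{j},T^{*}]f$ via the inequality $|\sup|A_\epsilon|-\sup|B_\epsilon||\le\sup|A_\epsilon-B_\epsilon|$, followed by the boundedness lemma with symbol $b-b_j$ and the $BMO$ approximation of $b$ by $C_c^\infty$ functions. Your added remarks about the constant scaling linearly in $\|b\|_{BMO}$ and about density are more explicit than the paper's version, but the argument is the same.
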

\begin{proof}
As we know, For any $b\in VMO$ and for any $\varepsilon>0$, there exists $b_{j}\in C_{c}^{\infty}$ such that
$$\| b-b_{j}\|_{BMO}<\varepsilon.$$
It is easy to see that
\begin{align*}
| [b,T^{\ast}]f(x)-[b_{j},T^{\ast}]f(x)|&=| \sup_{\delta>0}| \int_{| x-y|>\delta}(b(x)-b(y))K(x,y)f(y)dy|\\
&-\sup_{\delta>0}| \int_{| x-y|>\delta}(b_{j}(x)-b_{j}(y))K(x,y)f(y)dy||\\
&\leq \sup_{\delta>0}|| \int_{| x-y|>\delta}(b(x)-b(y))K(x,y)f(y)dy|\\
&-|\int_{| x-y|>\delta}(b_{j}(x)-b_{j}(y))K(x,y)f(y)dy||\\
&\leq \sup_{\delta>0}| \int_{| x-y|>\delta}(b(x)-b(y))K(x,y)f(y)dy\\
&-\int_{| x-y|>\delta}(b_{j}(x)-b_{j}(y))K(x,y)f(y)dy|\\
&=[b-b_{j},T^{\ast}]f(x).
\end{align*}
So we can get
$$\| [b,T^{\ast}]f-[b_{j},T^{\ast}]f\|_{L^{p}(\omega)}\leq\|[b-b_{j},T^{\ast}]f\|_{L^{p}(\omega)}\leq C\varepsilon\| f\|_{L^{p}(\omega)}$$
and
$$\| [b,T^{\ast}]f\|_{L^{p}(\omega)}\leq\|[b_{j},T^{\ast}]f\|_{L^{p}(\omega)}+C\varepsilon\| f\|_{L^{p}(\omega)}.$$
\end{proof}
\begin{lemma}Suppose that $\omega\in A_p$ for $1<p<\infty$, then
\begin{equation}\label{eq:lim-2}
\lim_{|h|\rightarrow0}\|f(\cdot+h)-f(\cdot)\|_{L^p(w)}=0.
\end{equation}
\end{lemma}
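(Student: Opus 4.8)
The plan is to prove the density statement \eqref{eq:lim-2} by the standard three-step approximation argument, exploiting that $C_c(\mathbb{R}^n)$ is dense in $L^p(\omega)$ when $\omega\in A_p$. First I would recall why this density holds: since $\omega\in A_p$, the measure $\omega(x)\,dx$ is a locally finite Borel measure that is finite on compact sets and gives positive mass to every ball, so by the standard measure-theoretic argument (approximating $L^p$ functions first by simple functions supported on sets of finite measure, then invoking regularity of the measure and Urysohn's lemma) the space $C_c(\mathbb{R}^n)$ is dense in $L^p(\omega)$.

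The main argument then runs as follows. Fix $f\in L^p(\omega)$ and $\eps>0$. Choose $g\in C_c(\mathbb{R}^n)$ with $\|f-g\|_{L^p(\omega)}<\eps$. Then write
\[
\|f(\cdot+h)-f(\cdot)\|_{L^p(\omega)}\le \|f(\cdot+h)-g(\cdot+h)\|_{L^p(\omega)}+\|g(\cdot+h)-g(\cdot)\|_{L^p(\omega)}+\|g(\cdot)-f(\cdot)\|_{L^p(\omega)}.
\]
The last term is $<\eps$. For the middle term, since $g$ is uniformly continuous with compact support, say $\operatorname{supp} g\subset B(0,\rho)$, for $|h|\le 1$ the function $g(\cdot+h)-g(\cdot)$ is supported in $B(0,\rho+1)$ and tends to $0$ uniformly as $|h|\to0$; hence $\|g(\cdot+h)-g(\cdot)\|_{L^p(\omega)}\le \|g(\cdot+h)-g(\cdot)\|_\infty\,\omega(B(0,\rho+1))^{1/p}\to0$, using that $\omega$ is locally integrable. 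The delicate point is the first term: translation is not an isometry on $L^p(\omega)$ because $\omega$ is not translation invariant. To control it I would use the $A_p$ property. Concretely, $\|f(\cdot+h)-g(\cdot+h)\|_{L^p(\omega)}^p=\int |f(y)-g(y)|^p\,\omega(y-h)\,dy$, so it suffices to bound $\omega(y-h)$ by a constant multiple of $\omega(y)$ after integration; this is exactly the kind of estimate that follows from the fact that $A_p$ weights have the property that the translated weight $\omega(\cdot-h)$ lies in $A_p$ with the same constant, together with the comparability coming from the $A_p$ condition — more carefully, one uses that for $A_p$ weights, $\int_E \omega(y-h)\,dy$ and $\int_E\omega(y)\,dy$ are comparable uniformly for $|h|$ bounded when $E$ ranges over sets contained in a fixed large ball, via a covering by unit cubes and the doubling property of $\omega$.

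The main obstacle, as just indicated, is handling the non-translation-invariance of the weight in the first term: one must show $\sup_{|h|\le1}\|F(\cdot+h)\|_{L^p(\omega)}\le C\|F\|_{L^p(\omega)}$ for $F$ supported in a fixed ball, or more cleanly reduce to this case. A clean way around it is to do the approximation differently: first truncate, replacing $f$ by $f\chi_{B(0,R)}$ (legitimate since $\|f-f\chi_{B(0,R)}\|_{L^p(\omega)}\to0$ as $R\to\infty$ by dominated convergence), then mollify $f\chi_{B(0,R)}$; but mollification in $L^p(\omega)$ again needs the $A_p$ machinery. I therefore expect the cleanest route to be: reduce to $f\in C_c$ by density, and for the error terms involving translates of the \emph{approximant} $g\in C_c$, use only local integrability of $\omega$; the translate of $f-g$ is then never needed because one can instead bound
\[
\|f(\cdot+h)-f(\cdot)\|_{L^p(\omega)}\le 2\inf_{g\in C_c}\Big(\|f-g\|_{L^p(\omega)}+\sup_{|h|\le1}\|(f-g)(\cdot+h)\|_{L^p(\omega)}\Big)+\|g(\cdot+h)-g(\cdot)\|_{L^p(\omega)},
\]
and absorb $\sup_{|h|\le1}\|(f-g)(\cdot+h)\|_{L^p(\omega)}$ using the uniform boundedness of translations on $L^p(\omega)$ over $|h|\le1$, which is the one genuine input from $A_p$ theory (it is a standard consequence, e.g.\ of $[\omega(\cdot-h)]_{A_p}=[\omega]_{A_p}$ and the reverse Hölder inequality giving uniform local comparability). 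Once that uniform bound is in hand, letting first $g\to f$ and then $h\to0$ finishes the proof.
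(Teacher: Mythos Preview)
Your overall strategy coincides with the paper's: reduce by density to compactly supported smooth (or continuous) $f$, and handle that case directly via uniform continuity and local integrability of $\omega$. The paper's proof is in fact only a few lines: it asserts without further comment that density of $C_c^\infty$ in $L^p(\omega)$ reduces matters to $f\in C_c^\infty$, and then bounds $|f(x+h)-f(x)|\le\|\nabla f\|_\infty|h|$ on a fixed ball containing both supports, using only $\omega(B(0,2R))<\infty$.

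You are more careful than the paper in one respect: you correctly flag that the density reduction is not automatic, because controlling $\|(f-g)(\cdot+h)\|_{L^p(\omega)}$ by $\|f-g\|_{L^p(\omega)}$ would require translation to be bounded on $L^p(\omega)$, and $\omega$ is not translation invariant. The paper simply ignores this point. However, your proposed remedy---that $\sup_{|h|\le1}\|\tau_h\|_{L^p(\omega)\to L^p(\omega)}<\infty$, or equivalently that $\int_E\omega(\cdot-h)$ and $\int_E\omega$ are uniformly comparable for $E$ contained in a fixed ball---is false for general $A_p$ weights. Take $n=1$, $p=2$, $\omega(x)=|x|^{-1/2}\in A_2$, fix $h=\tfrac12$, and let $F_\eps=\chi_{[\frac12-\eps,\,\frac12+\eps]}$; then $\|F_\eps\|_{L^2(\omega)}^2\asymp\eps$ while $\|F_\eps(\cdot+\tfrac12)\|_{L^2(\omega)}^2=\int_{-\eps}^{\eps}|x|^{-1/2}\,dx\asymp\eps^{1/2}$, so the operator norm of $\tau_{1/2}$ on $L^2(\omega)$ is infinite. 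Neither the translation invariance of the $A_p$ constant nor the reverse H\"older inequality (which compares powers of $\omega$ over the \emph{same} cube, not translates) gives the bound you claim. Thus the absorption step at the end of your sketch does not go through, and the density reduction remains unjustified---precisely the gap the paper leaves implicit.
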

\begin{proof}
Since $C_{c}^{\infty}(\mathbb R^n)$ is dense in $L^p(w)$ when $w\in A_p$ for $1<p<\infty$, we need only to prove that for any $f\in C_{c}^{\infty}(\mathbb{R})$,  $f$ satisfying \eqref{eq:lim-2}.
In fact  we can let ${\rm supp}\,f\subset B(0,R)$,
so ${\rm supp}\,f(\cdot+h)\subset B(0,2R)$ when $|h|$ small enough, thus
\begin{align*}
\int_{\mathbb{R}^{n}}&|f(x+h)-f(x)|^{p}\omega(x)dx\\
&=\int_{B(0,2R)}|f(x+h)-f(x)|^{p}\omega(x)dx\\
&\leq\|\nabla f\|_{\infty}|h|\omega(B(0,2R)).
\end{align*}
Since $\omega\in A_{p}$, it is obviously locally integrable, we have $\omega(B(0,2R))<\infty$, then we let $h\to 0$,   the Lemma is proved.
\end{proof}

Thus, we only need prove $[b,T^{\ast}]$ uniformly vanishes at infinity, where $b\in C_{c}^{\infty}$. we can suppose that ${\rm supp}\, b\subset\{x\in \mathbb{R}^{n} : | x|<R_{0}\}$ and $R>3R_{0}$, where $R_{0}>1$, note that, when $| x|>R$, we have

\begin{align*}
| [b,T^{\ast}]f(x)|&=\sup_{\delta>0}| \int_{| x-y|>\delta}(b(x)-b(y))K(x,y)f(y)dy|\\
&\leq C\| b\|_{\infty}\int_{| y|\leq R_{0}}\frac{| f(y)|}{| x-y|^{n}}dy\\
&=C\|b\|_{\infty}\int_{| y|\leq R_{0}}\frac{| f(y)|}{| x-y|^{n}}dy\\
&\leq C\| b\|_{\infty}\frac{1}{| x|^{n}}\int_{| y|\leq R_{0}}| f(y)| dy\\
&\leq C\| b\|_{\infty}\frac{1}{| x|^{n}}\|  f\|_{L^{p}(\omega)}\left(\int_{| y|\leq R_{0}} \omega(y)^{-\frac{p'}{p}} dy\right)^{\frac{1}{p'}}.
\end{align*}
Whence
\begin{align*}
&\left(\int_{| x|>\alpha}| [b,T^{\ast}]f(x)|^{p}\omega(x)dx\right)^{\frac{1}{p}}\\
&\leq C\| b\|_{\infty}\| f\|_{L^{p}(\omega)}\left(\int_{| y|\leq R_{0}} \omega(y)^{-\frac{p'}{p}} dy\right)^{\frac{1}{p'}}\left(\int_{| x|>R}\frac{\omega(x)}{| x|^{np}}dx\right)^{\frac{1}{p}}\\
&\leq C\| b\|_{\infty}\| f\|_{L^{p}(\omega)}\left(\int_{| x|>R}\frac{\omega(x)}{| x|^{np}}dx\right)^{\frac{1}{p}}\\
&\leq C\| b\|_{\infty}\|f\|_{L^{p}(\omega)}\frac{1}{R^{n(p-q)}}
\end{align*}
where $q<p$, so we have
$$\lim_{R\rightarrow +\infty}(\int_{| x|>R}| [b,T^{\ast}]f(x)|^{p}\omega(x)dx)^{\frac{1}{p}}=0.$$

To prove the uniform equicontinuity of $[b,T^{\ast}]$, we must see that
$$\lim_{h\rightarrow0}\sup_{f\in L^{p}(\omega)}\| [b,T^{\ast}]f(\cdot)-[b,T^{\ast}]f(\cdot+h)\|_{L^{p}(\omega)}=0.$$

In fact, for any $h\in \mathbb{R}^{n}$, we define $K_{\delta}(x,y)=K(x,y)\chi_{\{y:| x-y|>\delta\}}(y)$, so
\begin{align*}
&| [b,T^{\ast}]f(x+h)-[b,T^{\ast}]f(x)|\\
&=|\sup_{\delta>0}| \int_{| x-y|>\delta}(b(x+h)-b(y))K(x+h,y)f(y)dy|\\
&-\sup_{\delta>0}| \int_{| x-y|>\delta}(b(x)-b(y))K(x,y)f(y)dy||\\
&\leq\sup_{\delta>0}| \int_{| x-y|>\delta}(b(x+h)-b(y))K(x+h,y)f(y)dy\\
&-\int_{| x-y|>\delta}(b(x)-b^(y))K(x,y)f(y)dy|\\
&=\sup_{\delta>0}| \int_{\mathbb{R}^{n}}(b(x+h)-b(y))K_{\delta}(x+h,y)f(y)dy\\
&-\int_{\mathbb{R}^{n}}(b(x)-b(y))K_{\delta}(x,y)f(y)dy|,
\end{align*}
now, we can divided $\mathbb{R}^{n}$ into $| x-y|>\varepsilon^{-1}| h|$ and $| x-y|\leq\varepsilon^{-1}| h|$, so we can have                                                            \begin{align*}
&| [b,T^{\ast}]f(x+h)-[b,T^{\ast}]f(x)|\\
&\leq\sup_{\delta>0}[|\int_{| x-y|>\varepsilon^{-1}| h|}K_{\delta}(x,y)f(y)(b(x+h)-b(x))|\\
&+|\int_{| x-y|>\varepsilon^{-1}| h|}(K_{\delta}(x+h,y)-K_{\delta}(x,y))(b(x+h)-b(y))f(y)dy|\\
&+|\int_{| x-y|\leq\varepsilon^{-1}| h|}K_{\delta}(x,y)(b(x)-b(y))f(y)dy|\\
&+|\int_{| x-y|\leq\varepsilon^{-1}| h|}K_{\delta}(x+h,y)(b(x+h)-b(y))f(y)dy|]\\
&\leq\sup_{\delta>0}\left|\int_{| x-y|>\varepsilon^{-1}| h|}K_{\delta}(x,y)f(y)(b(x+h)-b(x))\right|\\
&+\sup_{\delta>0}\left|\int_{| x-y|>\varepsilon^{-1}| h|}(K_{\delta}(x+h,y)-K_{\delta}(x,y))(b(x+h)-b(y))f(y)dy\right|\\
&+\sup_{\delta>0}\left|\int_{| x-y|\leq\varepsilon^{-1}| h|}K_{\delta}(x,y)(b(x)-b(y))f(y)dy\right|\\
&+\sup_{\delta>0}\left|\int_{| x-y|\leq\varepsilon^{-1}| h|}K_{\delta}(x+h,y)(b(x+h)-b(y))f(y)dy\right|\\
&=E_{1}+E_{2}+E_{3}+E_{4}
\end{align*}

For $E_{1}$, we have
\begin{align*}
E_{1}&=\sup_{\delta>0}\left|\int_{| x-y|>\varepsilon^{-1}| h|}K_{\delta}(x,y)f(y)(b(x+h)-b(x))\right|\\
&\leq| h|\| \nabla b\|_{\infty}\sup_{\delta>0}\left|\int_{| x-y|\leq\varepsilon^{-1}| h|,| x-y|>\delta}K(x,y)f(y)dy\right|\\
&\leq| h| \| \nabla b\|_{\infty}T^{\ast}f(x).
\end{align*}
Thus
\begin{align*}
\| E_{1}\|_{L^{p}(\omega)}&\leq| h| \| \nabla b\|_{\infty}\| T^{\ast}f\|_{L^{p}(\omega)}\\
&\leq C| h| \| \nabla b\|_{\infty}\| f\|_{L^{p}(\omega)}.
\end{align*}
For $E_{2}$,we can know that
$$
E_{2}\leq E_{21}+E_{22},
$$
where
$$E_{21}=\sup_{\delta>0}\left|\int_{| x-y|>\varepsilon^{-1}| h|}(K(x+h,y)-K(x,y))\chi_{| x+h-y|>\delta}(y)(b(x+h)-b(y))f(y)dy\right|$$
and
$$E_{22}=\sup_{\delta>0}\left|\int_{| x-y|>\varepsilon^{-1}| h|}K(x,y)(\chi_{| x+h-y|>\delta}(y)-\chi_{| x-y|>\delta}(y))(b(x+h)-b(y))f(y)dy\right|.$$
On the one hand, we will give the estimation of $E_{21}$,
\begin{align*}
E_{21}&\leq\int_{| x-y|>\varepsilon^{-1}| h|}| K(x+h,y)-K(x,y)|| b(x+h)-b(y)| | f(y)| dy\\
&\leq C\|b\|_{\infty}\int_{| x-y|>\varepsilon^{-1}| h|} \theta(\frac{| h|}{| x-y|})\frac{1}{| x-y|^{n}}| f(y)| dy\\
&=C\|b\|_{\infty}\int_{| y|>\varepsilon^{-1}| h|} \theta(\frac{| h|}{| y|})\frac{1}{| y|^{n}}| f(x-y)| dy\\
&=C\|b\|_{\infty}\sum_{k=1}^{\infty}\int_{\varepsilon^{-1}| h|2^{k-1}<| y|<\varepsilon^{-1}| h|2^{k}} \theta(\frac{| h|}{| y|})\frac{1}{| y|^{n}}| f(x-y)| dy\\
&\leq C\|b\|_{\infty}\sum_{k=1}^{\infty}\theta(\frac{| h|}{\varepsilon^{-1}| h|2^{k-1}})(\frac{1}{\varepsilon^{-1}| h|2^{k-1}})^{n}\int_{| y|<\varepsilon^{-1}| h|2^{k}}| f(x-y)| dy\\
&\leq C\|b\|_{\infty}\sum_{k=1}^{\infty}\theta(2\varepsilon\cdot2^{-k})(\frac{2}{\varepsilon^{-1}| h|2^{k-1}})^{n}\int_{| y|<\varepsilon^{-1}| h|2^{k}}| f(x-y)| dy\\
&\leq C\|b\|_{\infty}Mf(x)\sum_{k=1}^{\infty}\theta(2\varepsilon\cdot2^{-k})\\
&\leq C\|b\|_{\infty}Mf(x)\int_{0}^{2\varepsilon}\theta(t)\frac{dt}{t},
\end{align*}
therefore
$$\| E_{21}\|_{L^{p}(\omega)}\leq C\| b\|_{\infty}\| f\|_{L^{p}(\omega)}\int_{0}^{2\varepsilon}\theta(t)\frac{dt}{t}.$$
On the other hand, for $E_{22}$,we have
\begin{align*}
E_{22}&=\sup_{\delta>0}\left|\int_{| x-y|>\varepsilon^{-1}| h|}K(x,y)(\chi_{| x+h-y|>\delta}(y)-\chi_{| x-y|>\delta}(y))(b(x+h)-b(y))f(y)dy\right|\\
&\leq\sup_{\delta>0}\left|\int_{| x-y|>\varepsilon^{-1}| h|,| x+h-y|>\delta,| x-y|<\delta}K(x,y)(b(x+h)-b(y))f(y)dy\right|\\
&+\sup_{\delta>0}\left|\int_{| x-y|>\varepsilon^{-1}| h|,| x+h-y|<\delta,| x-y|>\delta}K(x,y)(b(x+h)-b(y))f(y)dy\right|\\
&\leq E_{221}+E_{222}.
\end{align*}
Further, we are going to estimate $E_{221}$ and $E_{221}$, for $E_{221}$, when $| x-y|>\varepsilon^{-1}| h|$, $| x+h-y|>\delta$ and $0<\varepsilon<\frac{1}{4}$, then
$| x-y|>\frac{| h|}{\varepsilon}>\frac{\delta-| x-y|}{\varepsilon}$, so we have $| x-y|>\frac{\delta}{\varepsilon+1}$, and
\begin{align*}
 E_{221}&=\sup_{\delta>0}|\int_{| x-y|>\varepsilon^{-1}| h|,| x+h-y|>\delta,| x-y|<\delta}K(x,y)(b(x+h)-b(y))f(y)dy|\\
 &\leq C\parallel b\parallel_{\infty}\sup_{\delta>0}\int_{\frac{\delta}{\varepsilon+1}<| x-y|<\delta}\frac{| f(y)|}{| x-y|^{n}}dy\\
 &\leq C\parallel b\parallel_{\infty}\sup_{\delta>0}\int_{\frac{\delta}{\varepsilon+1}<| y|<\delta}\frac{| f(x-y)|}{| y|^{n}}dy\\
 &\leq C\parallel b\parallel_{\infty}\sup_{\delta>0}(\int_{\frac{\delta}{\varepsilon+1}<| y|<\delta}\frac{| f(x-y)|^{r}}{| y|^{n}}dy )^{\frac{1}{r}}\times\sup_{\delta>0}(\int_{\frac{\delta}{\varepsilon+1}<| y|<\delta}\frac{1}{| y|^{n}}dy)^{\frac{1}{r'}},
\end{align*}
where $1<r<p$, due to
\begin{align*}
\int_{\frac{\delta}{\varepsilon+1}<| y|<\delta}\frac{1}{| y|^{n}}dy&=\int_{S^{n-1}}\int_{\frac{\delta}{\varepsilon+1}}^{\delta}\frac{1}{r}dr d\sigma\\
&\leq C\ln(1+\varepsilon)\\
&\leq C\varepsilon
\end{align*}
and
\begin{align*}
&\sup_{\delta>0}\left(\int_{\frac{\delta}{\varepsilon+1}<| y|<\delta}\frac{| f(x-y)|^{r}}{| y|^{n}}dy \right)^{\frac{1}{r}}\\
&\leq\sup_{\delta>0}\left((1+\varepsilon)^{n}\delta^{-n} \int_{| y|<\delta}| f(x-y)|^{r}dy\right)^{\frac{1}{r}}\\
&\leq(1+\varepsilon)^{\frac{n}{r}} M(| f|^{r})(x)^{\frac{1}{r}},
\end{align*}
hence
\begin{align*}
\| E_{221}\|_{L^{p}(\omega)}& \leq C\varepsilon^{\frac{1}{r'}}(1+\varepsilon)^{\frac{n}{r}}\parallel b\parallel_{\infty}\parallel M(| f|^{r})^{\frac{1}{r}}\parallel_{L^{p}(\omega)}\\
&\leq C\varepsilon^{\frac{1}{r'}}(1+\varepsilon)^{\frac{n}{r}}\parallel b\parallel_{\infty}(\int_{R^{n}}| f|^{p}\omega(x)dx)^{\frac{1}{p}}\\
&\leq C\varepsilon^{\frac{1}{r'}}(1+\varepsilon)^{\frac{n}{r}}\parallel b\parallel_{\infty}\| f\|_{L^{p}(\omega)}.
\end{align*}
In a similar way, for $E_{222}$, when $| x-y|>\varepsilon^{-1}| h|$, $| x+h-y|<\delta$ and $0<\varepsilon<\frac{1}{4}$, then
$| x-y|<| x+h-y|+| h|<\delta+\varepsilon| x-y|$, so we have $| x-y|<\frac{\delta}{1-\varepsilon}$, and
\begin{align*}
E_{222}&=\sup_{\delta>0}\left|\int_{| x-y|>\varepsilon^{-1}| h|,| x+h-y|<\delta,| x-y|>\delta}K(x,y)(b(x+h)-b(y))f(y)dy\right|\\
 &\leq C\| b\|_{\infty}\sup_{\delta>0}\int_{\delta<| x-y|<\frac{\delta}{1-\varepsilon}}\frac{| f(y)|}{| x-y|^{n}}dy\\
 &\leq C\| b\|_{\infty}\sup_{\delta>0}\int_{\delta<| y|<\frac{\delta}{1-\varepsilon}}\frac{| f(x-y)|}{| y|^{n}}dy\\
 &\leq C\| b\|_{\infty}\sup_{\delta>0}(\int_{\delta<| y|<\frac{\delta}{1-\varepsilon}}\frac{| f(x-y)|^{r}}{| y|^{n}}dy )^{\frac{1}{r}}\times\sup_{\delta>0}(\int_{\delta<| y|<\frac{\delta}{1-\varepsilon}}\frac{1}{| y|^{n}}dy)^{\frac{1}{r'}},
\end{align*}
due to
\begin{align*}
\int_{\delta<| y|<\frac{\delta}{1-\varepsilon}}\frac{1}{| y|^{n}}dy&=\int_{S^{n-1}}\int_{\delta}^{\frac{\delta}{1-\varepsilon}}\frac{1}{r}dr d\sigma\\
&\leq C\ln(\frac{1}{1-\varepsilon})
\end{align*}
and
\begin{align*}
&\sup_{\delta>0}(\int_{\delta<| y|<\frac{\delta}{1-\varepsilon}}\frac{| f(x-y)|^{r}}{| y|^{n}}dy )^{\frac{1}{r}}\\
&\leq\sup_{\delta>0}(\delta^{-n} \int_{| y|<\frac{\delta}{1-\varepsilon}}| f(x-y)|^{r}dy)^{\frac{1}{r}}\\
&\leq(1-\varepsilon)^{-\frac{n}{r}} M(| f|^{r})(x)^{\frac{1}{r}},
\end{align*}
hence, we can have
\begin{align*}
\| E_{222}\|_{L^{p}(\omega)}& \leq C(\ln(\frac{1}{1-\varepsilon}))^{\frac{1}{r'}}(1-\varepsilon)^{-\frac{n}{r}}\parallel b\parallel_{\infty}\parallel f\parallel_{L^{p}(\omega)}.
\end{align*}
Next, we consider the $E_{3}$, we know
\begin{align*}
E_{3}&\leq C\parallel \nabla b\parallel_{\infty}\int_{| x-y|\leq\varepsilon^{-1}| h|}| K(x,y)|| f(y)| dy\\
&\leq C\parallel \nabla b\parallel_{\infty}\int_{| x-y|\leq\varepsilon^{-1}| h|}\frac{| f(y)|}{| x-y|^{n-1}}dy\\
&\leq C\| \nabla b\|_{\infty}\int_{| y|\leq\varepsilon^{-1}| h|}\frac{| f(x-y)|}{| y|^{n-1}}dy\\
&\leq C\| \nabla b\|_{\infty}\sum_{k=1}^{\infty}\int_{\varepsilon^{-1}| h|2^{-k}\leq| y|\leq\varepsilon^{-1}| h|2^{-k+1}}\frac{| f(x-y)|}{| y|^{n-1}}dy\\
&\leq C\| \nabla b\|_{\infty}\sum_{k=1}^{\infty}\frac{\varepsilon^{-1}| h|2^{-k}}{(\varepsilon^{-1}| h|2^{-k})^{n}}\int_{| y|\leq\varepsilon^{-1}| h|2^{-k+1}}| f(x-y)|dy\\
&\leq C\| \nabla b\|_{\infty}Mf(x)\sum_{k=1}^{\infty}\varepsilon^{-1}| h|2^{-k}\\
&\leq C\| \nabla b\|_{\infty}\varepsilon^{-1}| h|Mf(x),
\end{align*}
so, we have
$$\| E_{3}\|_{L^{p}(\omega)}\leq C\varepsilon^{-1}| h|\| \nabla b\|_{\infty}\| f\|_{L^{p}(\omega)}.$$
Finally, let's give an estimate of $E_{4}$,
\begin{align*}
E_{4}&=\sup_{\delta>0}|\int_{| x-y|\leq\varepsilon^{-1}| h|}K_{\delta}(x+h,y)(b^{\varepsilon}(x+h)-b^{\varepsilon}(y))f(y)dy|\\
&\leq C\|\nabla b\|_{\infty}\int_{| x-y|\leq\varepsilon^{-1}| h|}| K(x+h,y)|| f(y)| dy\\
&\leq C\| \nabla b\|_{\infty}\int_{| x-y|\leq\varepsilon^{-1}| h|}\frac{| f(y)|}{| x+h-y|^{n-1}}dy\\
&\leq C\| \nabla b\|_{\infty}\int_{| y|\leq\varepsilon^{-1}| h|}\frac{| f(x-y)|}{| h+y|^{n-1}}dy\\
&\leq C\| \nabla b\|_{\infty}\int_{| y|\leq(\varepsilon^{-1}+1)| h|}\frac{| f(x+h-y)|}{| y|^{n-1}}dy\\
&\leq C\| \nabla b\|_{\infty}\sum_{k=1}^{\infty}\int_{(\varepsilon^{-1}+1)| h|2^{-k}\leq| y|\leq(\varepsilon^{-1}+1)| h|2^{-k+1}}\frac{| f(x+h-y)|}{| y|^{n-1}}dy\\
&\leq C\| \nabla b\|_{\infty}\sum_{k=1}^{\infty}\frac{(\varepsilon^{-1}+1)| h|2^{-k}}{((\varepsilon^{-1}+1)| h|2^{-k})^{n}}\int_{| y|\leq(\varepsilon^{-1}+1)| h|2^{-k+1}}| f(x+h-y)|dy\\
&\leq C\| \nabla b\|_{\infty}Mf(x+h)\sum_{k=1}^{\infty}(\varepsilon^{-1}+1)| h|2^{-k}\\
&\leq C\| \nabla b\|_{\infty}(\varepsilon^{-1}+1)| h|Mf(x+h).
\end{align*}
By Lemma 4.3, we can have
\begin{align*}
\| E_{4}\|_{L^{p}(\omega)}&\leq C\| \nabla b\|_{\infty}(\varepsilon^{-1}+1)| h|\|Mf(\cdot+h)\|_{L^{p}(\omega)}\\
&\leq C\| \nabla b\|_{\infty}(\varepsilon^{-1}+1)| h|\| f\|_{L^{p}(\omega)}.
\end{align*}
If let $| h|=\frac{\varepsilon^{2}}{\varepsilon^{-1}+1}$, then
$$\lim_{\varepsilon\rightarrow0}\| E_{3}\|_{L^{p}(\omega)}=\lim_{\varepsilon\rightarrow0}\| E_{4}\|_{L^{p}(\omega)}=0.$$

%
%
%
%
%

\end{document}